\numberwithin{equation}{section}
\def \beq {\begin{equation}}
\def \eeq {\end{equation}}
\def \ba {\begin{array}}
\def \ea {\end{array}}
\def \dis {\displaystyle}
\renewcommand{\r}{\mathop{\rightarrow}}
\def\r{\rightarrow}
\renewcommand{\r}{\mathop{\rightarrow}}
\newcommand{\fdem}{\hspace*{\fill}~$\Box$\par\endtrivlist\unskip}
\def \div {\mbox{\rm div}\,}
\def \de {\delta}
\def \ep {\varepsilon}
\def \ph {\varphi}
\newcommand{\N}{\mathbb{N}}     
\newcommand{\Z}{\mathbb{Z}}
\newcommand{\R}{\mathbb{R}}
\def \cD {\mathscr{D}}
\def \cI {\mathscr{I}}
\def \cM {\mathscr{M}}
\def \beq {\begin{equation}}
\def \eeq {\end{equation}}
\def \ba {\begin{array}}
\def \ea {\end{array}}
\def \bs {\bigskip}
\def \ecart {\noalign{\medskip}}
\newenvironment{proof}[1]{\textit{Proof#1.\,}}{\fdem}
\newtheorem{atheo}{Theorem}[section]
\newtheorem{acor}{Corollary}[section]
\newtheorem{alem}{Lemma}[section]
\newtheorem{arem}{Remark}[section]
\newtheorem{Aexa}{Exemple}[section]
\newtheorem{apro}[alem]{Proposition}
\newtheorem{adef}[alem]{Definition}
\title{Revisiting the asymptotics of the flow for some dynamical systems on the torus}
\author{\normalsize Marc Briane \& Lo\"\i c Herv\'e
\\
\normalsize Univ Rennes, INSA Rennes,  CNRS, IRMAR - UMR 6625, F-35000 Rennes, France
\\
\normalsize mbriane@insa-rennes.fr \& loic.herve@insa-rennes.fr
}
\begin{document}
\maketitle
\vskip .5cm\noindent
\begin{abstract}
In this paper we study the large time asymptotics of the flow of a dynamical system $X'=b(X)$ posed in the $d$-dimensional torus.
Rather than using the classical unique ergodicity condition which is not fulfilled if $b$ vanishes at different points, we only assume that the set of the averages of $b$ with respect to the invariant probability measures for the flow is reduced to a singleton. We also rewrite the Liouville theorem which holds for any invariant probability measure $\mu$, namely $\mu\,b$ is divergence free, as a divergence-curl formula satisfied by any regular periodic function. The combination of these two tools turns out to be a new approach to get the asymptotics for some flows. This allows us to obtain the desired asymptotics in any dimension when $b = a\,\xi$ with $a$ a possibly vanishing periodic nonnegative function and $\xi$ a nonzero vector in $\R^d$, or when $b = A\nabla v$ with $A$ a periodic nonnegative symmetric matrix-valued function and $v$ a periodic function.
\end{abstract}
{\bf Keywords:} dynamical system, flow, torus, $\Z^d$-periodic, asymptotics, invariant measure, ergodic theorem, divergence-curl result
\par\bs\noindent
{\bf Mathematics Subject Classification:} 37C10, 37C40, 34E10
\section{Introduction}
This paper is devoted to the large time asymptotics of the solution $X(t,x)$ to the dynamical system
\beq\label{bXi}
\left\{\ba{ll}
\dis {\partial X\over\partial t}(t,x)=b(X(t,x)), & t\in\R
\\ \ecart
X(0,x)=x\in\R^d,
\ea\right.
\eeq
where $b$ is a $C^1$-regular $\Z^d$-periodic vector field in $\R^d$. More precisely, we focus on the existence of the limit of $X(t,x)/t$ as $t\to\infty$ for $x\in\R^d$. This problem is strongly connected to the asymptotic behavior of the transport equation with an oscillating velocity
\beq\label{traequ}
{\partial u_\ep\over\partial t}(t,x)+b\left({x\over\ep}\right)\cdot\nabla_x u_\ep(t,x)=0\quad\mbox{for }(t,x)\in[0,\infty)\times\R^d,
\eeq
which is dealt with in \cite{Bre,HoXi,Gol1,Gol2,Tas,Bri2}.
Otherwise, this question naturally arises in ergodic theory since it involves the {\em flow} $T_t$ defined by
\beq\label{Tti}
(T_t\ph)(x):=\ph(X(t,x))\quad\mbox{for $\ph$ continuous and $\Z^d$-periodic in }\R^d,\ x\in\R^d,
\eeq
which is associated with system \eqref{bXi}, and the Borel measures $\mu$ on the torus $Y_d:=\R^d/\Z^d$ which are {\em invariant for the flow},
{\em i.e.}
\beq\label{muinvTt}
\forall\,t\in\R,\quad \mu\circ T_t=\mu.
\eeq
A strengthened variant of the famous Birkhoff ergodic theorem \cite[Theorem~2, Section~1.8]{CFS} claims that if the flow is {\em uniquely ergodic}, {\em i.e.} there exists a unique probability measure $\mu$ on $Y_d$ invariant for the flow, then any continuous $\Z^d$-periodic function $f$ satisfies
\beq\label{Ttuninv}
\forall\,x\in\R^d,\quad \lim_{t\to\infty}\left[{1\over t}\int_0^t f(X(s,x))\,ds\right]=\int_{Y_d}f(y)\,d\mu(y),
\eeq
and the converse actually holds true.
In the particular case where $f:=b$, limit \eqref{Ttuninv} yields
\beq\label{asyXi}
\forall\,x\in\R^d,\quad \lim_{t\to\infty}{X(t,x)\over t}=\int_{Y_d}b(y)\,d\mu(y).
\eeq
The unique ergodicity condition seems to be a rather restrictive condition on the flows of type~\eqref{bXi}. For example, if the vector field $b$ vanishes at two points $x\neq y$ in the torus, then any convex combination of the Dirac masses $\de_x$ and $\delta_y$ are invariant for the flow. When $b$ does not vanish in $\R^d$, the unique ergodicity of the flow is not clear. This is true in dimension one (see Proposition~\ref{pro.1D}) but the situation is more complicated in higher dimension. As best we know the asymptotics of the flow is completely known only in dimension two for a non-vanishing vector field $b$. Specifically in dimension two, Peirone~\cite{Pei} obtained the asymptotics of the flow through the following alternative:
\begin{itemize}
\item Under the existence of a periodic trajectory $X(\cdot,y)$ of \eqref{bXi} with respect to the torus (see \eqref{XtperY}), the limit of \eqref{asyXi} does exist but may depend on $x$.
\item In the absence of periodic trajectory with respect to the torus, the limit of \eqref{asyXi} exists independently of $x$. To this end, Peirone used an one-dimensional ergodicity result transversally to Siegel's curve, but his approach has nothing to do with the unique ergodicity of the flow.
\end{itemize}
On the other hand, when the two-dimensional flow associated with a non-vanishing vector field $b=(b_1,b_2)$ admits an invariant measure with a positive $\Z^2$-periodic density $\sigma$ with respect to the Lebesgue measure, or equivalently, by virtue of Liouville's theorem $\sigma b$ is divergence free in~$\R^2$, then Kolmogorov's theorem \cite{Kol} implies the existence of a diffeomorphism on the torus which transforms the dynamical system \eqref{bXi} into the dynamical system
\beq\label{aYi}
\left\{\ba{ll}
\dis {\partial Y\over\partial t}(t,y)=a(Y(t,y))\,\xi, & t\in\R
\\ \ecart
Y(0,x)=y\in\R^2,
\ea\right.
\eeq
where $a$ is a regular $\Z^2$-periodic vector field in $\R^2$ and $\xi=(\xi_1,\xi_2)$ is a nonzero constant vector in~$\R^2$.
In this two-dimensional setting with the additional assumption that $b_1$ is nonvanishing, Tassa \cite{Tas} also obtained the asymptotics of the flow through the following alternative which is strongly connected to the above Peirone's alternative:
\begin{itemize}
\item If $\xi_1$ and $\xi_2$ are rationally dependent, the limit of \eqref{asyXi} exists but does depend on $x$.
\item  If $\xi_1$ and $\xi_2$ are rationally independent, or equivalently the so-called {\em rotation number} is irrational (see, {\em e.g.}, \cite[Chapter~I, Section~4.1]{Sin}), the limit of \eqref{asyXi} exists independently of~$x$. The more general result where the vector field $b$ (rather than $b_1$) is nonvanishing with an invariant density $\sigma\in C^5(Y_2)$ is due to Kolmogorov~\cite{Kol}.
\end{itemize}
Note that in \cite{Pei,Tas} the nonvanishing condition of the vector field $b$ is essential to obtain the asymptotics of the flow.
\par
Actually, the unique ergodicity of the flow \eqref{Tti} associated with $b$ is not needed to get the desired asymptotics \eqref{asyXi} which is much less restrictive than \eqref{Ttuninv}. Indeed, our approach consists in replacing the unique ergodicity assumption by the weaker uniqueness condition
\beq\label{Cbi}
\#\left\{\int_{Y_d}b(y)\,d\mu(y):\mu\mbox{ is an invariant probability measure for the flow}\right\}=1.
\eeq
This new condition is obtained by observing that the Birkhoff time average in \eqref{asyXi} is only addressed (contrary to \eqref{Ttuninv}) to the function $b$. Then, we revisit (see Proposition~\ref{pro.invmeas}) the proof of the existence of an invariant probability measure for the flow defined on a compact space (the torus here).
We conclude by proving (see Theorem~\ref{thm.Cb}) that condition \eqref{Cbi} turns out to be equivalent to asymptotics \eqref{asyXi}.
On the other hand, we show (see Theorem~\ref{thm.divcurl} and Remark~\ref{rem.divcurl}) that the Liouville theorem satisfied by an invariant measure $\mu$ for the flow associated with $b$ can be regarded as a divergence-curl result such that for any $\Z^d$-periodic $C^1$-function $\psi$,
\beq\label{divcurli}
\int_{Y_d} b(y)\cdot\nabla\psi(y) \,d\mu(y)=0.
\eeq
Our new approach consists in combining the tools \eqref{Cbi} and \eqref{divcurli} to obtain the limit of $X(t,x)/t$ as $t\to\infty$ for some dynamical systems \eqref{bXi}. So, we get the desired asymptotics for the dynamical system \eqref{aYi} (see Proposition~\ref{pro.asyxi}) assuming that the $\Z^d$-periodic function $a$ is nonnegative and may vanish contrary to \cite{Pei,Tas}, and that the vector $\xi$ satisfies one of the two following conditions which extend in any dimension the above two-dimensional conditions obtained by Peirone (see Remark~\ref{rem.P} and Proposition~\ref{pro.P}) and Tassa:
\begin{itemize}
\item There exists $T>0$ such that $T\xi\in\Z^d$, which leads us to a limit of $X(t,x)/ t$ depending on~$x$.
\item For any $k\in\Z^d\setminus\{0\}$, $\xi\cdot k\neq 0$, which leads us to the limit
\[
\forall\,x\in\R^d,\quad \lim_{t\to\infty}{X(t,x)\over t}=a^*\xi\quad\mbox{with}\quad
a^*:=\left\{\ba{cl}
\dis \left(\int_{Y_d}a^{-1}(y)\,dy\right)^{-1} & \mbox{if $a>0$ in }\R^d
\\ \ecart
0 & \mbox{if $a$ vanishes in }\R^d.
\ea\right.
\]
\end{itemize}
When $a$ vanishes at different points in the torus, the unique ergodicity of the flow \eqref{Ttuninv} is not satisfied, while the asymptotics of the flow holds true for $b$ in \eqref{asyXi}.
The former asymptotics result easily extends (see Corollary~\ref{cor.bPhaxi}) to the case where the vector field $b$ is rectifiable to a fixed direction $\xi$ through a diffeomorphism $\Phi$ on the torus, {\em i.e.} $b=a\circ\Phi\,\nabla\Phi^{-1}\xi$ for some $\Z^d$-periodic nonnegative function $a$.
Finally, again using \eqref{Cbi} and \eqref{divcurli} we obtain the zero vector limit in \eqref{asyXi} when $b$ is a current field (see Proposition~\ref{pro.ADv}), {\em i.e.} $b=A\nabla v$ with $A$ a regular $\Z^d$-periodic nonnegative symmetric matrix-valued conductivity and $v$ a regular $\Z^d$-periodic potential.
\subsection*{Notation}
\begin{itemize}
\item $Y_d$ for $d\geq 1$, denotes the $d$-dimensional torus $\R^d/\Z^d$, which is identified to the cube $[0,1)^d$ in $\R^d$.
\item $C^k_c(\R^d)$ for $k\in\N\cup\{\infty\}$, denotes the space of the real-valued functions in $C^k(\R^d)$ with compact support.
\item $C^k_\sharp(Y_d)$ for $k\in\N\cup\{\infty\}$, denotes the space of the real-valued functions $f\in C^k(\R^d)$ which are $\Z^d$-periodic, {\em i.e.}
\[
\forall\,k\in\Z^d,\ \forall\,x\in\R^d,\quad f(x+k)=f(x).
\]
\item $L^p_\sharp(Y_d)$ for $p\geq 1$, denotes the space of the real-valued functions in $L^p_{\rm loc}(\R^d)$ which are $\Z^d$-periodic.
\item $\cM(Y_d)$ denotes the space of the Radon measures on $Y_d$, and $\cM_p(Y_d)$ denotes the space of the probability measures on $Y_d$.
\item $\cD'(\R^d)$ denotes the space of the distributions on $\R^d$.
\end{itemize}
\subsection*{Definitions and recalls}
Let $b:\R^d\to\R^d$ be a vector-valued function in $C^1_\sharp(Y_d)^d$.
Consider the dynamical system
\beq\label{bX}
\left\{\ba{ll}
\dis {\partial X\over\partial t}(t,x)=b(X(t,x)), & t\in\R
\\ \ecart
X(0,x)=x\in\R^d.
\ea\right.
\eeq
The solution $X(\cdot,x)$ of \eqref{bX} which is known to be unique (see, {\em e.g.}, \cite[Section~17.4]{HSD}) is associated with the flow $(T_t)_{t\in\R}$, defined by
\beq\label{Tt}
T_t(\ph)(x):=\ph\big(X(t,x)\big)\quad\mbox{for }\ph\in C^0_\sharp(Y_d)\mbox{ and }x\in\R^d,
\eeq
which satisfies the semi-group property
\beq\label{sgroup}
\forall\,s,t\in\R,\quad T_{s+t}=T_s\circ T_t,
\eeq
and is well defined in the torus since
\beq\label{XxperY}
\forall\,t\in\R,\ \forall\,x\in\R^d,\ \forall\,k\in\Z^d,\ \quad X(t,x+k)=X(t,x)+k.
\eeq
Property~\eqref{XxperY} follows immediately from the uniqueness of the solution $X$ to \eqref{bX} combined with the $\Z^d$-periodicity of $b$.
\par
A solution $X(\cdot,x)$ to \eqref{bX} is said to be {\em periodic in the torus} if there exists $\tau>0$ and $k\in\Z^d$ such that
\beq\label{XtperY}
\forall\,t\in\R,\quad X(t+\tau,x)=X(t,x)+k.
\eeq
If $k=0$ the solution is said to be periodic in $\R^d$.
\par
A measure $\mu$ in $\cM_p(Y_d)$ is said to be {\em invariant for the flow} $T_t$ (see, {\em e.g.}, \cite[Chap. 2]{CFS}) if
\beq\label{invmu}
\forall\,t\in\R,\ \forall\,\psi\in C^0_\sharp(Y_d),\quad \int_{Y_d}T_t(\psi)(y)\,d\mu(y)=\int_{Y_d}\psi\big(X(t,y)\big)\,d\mu(y)=\int_{Y_d}\psi(y)\,d\mu(y),
\eeq
or equivalently, the image measure of $\mu$ by the flow $T_t$ agrees with $\mu$.
\section{Some general results}\label{s.genres}
\subsection{Existence of invariant probability measures for the flow}
When a flow preserves the set of the continuous functions on a compact metric space, the existence of an invariant probability measure  for the flow is a classical statement which can be derived thanks to a weak compactness argument applied to sequences of probability measures defined from the Birkhoff time averages in (1.5) (see, {\em e.g.}, \cite[Theorem~1, Section~1.8]{CFS} in the discrete time case). The following result adapts this statement restricting it to the limit points of the Birkhoff time averages for a given function.
\begin{apro}\label{pro.invmeas}
Let $b\in C^1_\sharp(Y_d)^d$. There exists an invariant {probability measure on $Y_d$} for the flow $T_t$ \eqref{Tt} associated with $b$.
Moreover, let $g\in C^0_\sharp(Y_d)$ and $x\in\R^d$ be fixed, and let $(t_n)_{n\in\N}\in\R^\N$ be such that $\lim_n t_n = \infty$. Then, for any limit point $a$ of the sequence $(u_n)_{n\in\N}\in\R^\N$ defined by
\beq\label{ung}
u_n := \frac{1}{t_n}\int_0^{t_n} g(X(s,x))\,ds,\quad n\in\N,
\eeq
there exists a probability measure $\mu\in \cM_p(Y_d)$ (depending {\em a priori} on $x$ and $g$) which is invariant for the flow $T_t$ and satisfies
\beq\label{afmu}
a=\int_{Y_d} g(y)\, d\mu(y).
\eeq
\end{apro}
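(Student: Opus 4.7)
The plan is to run the classical Krylov--Bogolyubov construction, but to organize the subsequence extractions so that the resulting invariant measure $\mu$ realizes the prescribed limit point $a$.

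First, for every $n\in\N$ I would introduce the probability measure $\mu_n\in\cM_p(Y_d)$ associated via the Riesz representation theorem with the positive normalized linear form
\[
\phi\in C^0_\sharp(Y_d)\;\longmapsto\;\frac{1}{t_n}\int_0^{t_n}\phi\bigl(X(s,x)\bigr)\,ds,
\]
so that, in particular, $\int_{Y_d}g\,d\mu_n=u_n$. The integrand depends on $X(s,x)$ only modulo $\Z^d$, by $\Z^d$-periodicity of $\phi$, so $\mu_n$ is genuinely a measure on the torus $Y_d$, and it has total mass $1$.

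Next, let $a$ be a limit point of $(u_n)_n$ and extract a subsequence, still denoted $(u_n)_n$, along which $u_n\to a$. Since $Y_d$ is compact and $C^0(Y_d)$ is separable, the unit ball of $\cM(Y_d)$ is weak-$*$ compact and metrizable; I may thus extract a further subsequence of $(\mu_n)_n$ converging weakly-$*$ to some $\mu\in\cM_p(Y_d)$ (the limit is a probability measure because $\int_{Y_d}1\,d\mu_n=1$ for every $n$). Passing to the limit in $\int_{Y_d}g\,d\mu_n=u_n$ then gives the desired identity $\int_{Y_d}g\,d\mu=a$.

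It remains to prove that $\mu$ is invariant. For fixed $t\in\R$ and $\psi\in C^0_\sharp(Y_d)$, note first that $T_t(\psi)$ still lies in $C^0_\sharp(Y_d)$ by the periodicity property \eqref{XxperY}. Using the semigroup identity $X(t,X(s,x))=X(s+t,x)$ (which follows from the uniqueness of solutions to \eqref{bX}) together with the change of variable $u=s+t$, one obtains
\[
\int_{Y_d}T_t(\psi)\,d\mu_n-\int_{Y_d}\psi\,d\mu_n\;=\;\frac{1}{t_n}\left[\int_{t_n}^{t_n+t}\!\psi(X(u,x))\,du-\int_0^{t}\!\psi(X(u,x))\,du\right],
\]
whose absolute value is bounded by $2|t|\,\|\psi\|_\infty/t_n\to 0$. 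Passing to the weak-$*$ limit yields $\int_{Y_d}T_t(\psi)\,d\mu=\int_{Y_d}\psi\,d\mu$, which is the invariance relation \eqref{invmu}. The first assertion of the proposition (existence of some invariant probability measure) is then an immediate by-product, obtained by applying the construction to any fixed $g\in C^0_\sharp(Y_d)$ and any sequence with $t_n\to\infty$, since $(u_n)_n$ is bounded and hence admits at least one limit point. None of the individual steps is expected to be the real obstacle; the only subtlety is to perform the two successive extractions in the correct order so that both $\int g\,d\mu=a$ and the invariance hold simultaneously for the \emph{same} measure $\mu$.
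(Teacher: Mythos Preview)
Your proof is correct and follows essentially the same Krylov--Bogolyubov route as the paper: the paper isolates the weak-$*$ compactness and invariance step into a separate lemma (applied to the subsequence along which $u_n\to a$), whereas you carry it out inline, but the two successive extractions and the semigroup/change-of-variable computation for invariance are identical.
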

\begin{proof}{}
As above mentioned the following lemma is classical in ergodic theory. For the reader's convenience its proof is postponed to the Appendix. 
\begin{alem} \label{lem-extrac}
Let $x\in\R^d$, let $(r_n)_{n\in\N}\in\R^\N$ be such that $\lim_n r_n = \infty$, and let $\nu_n\in \cM_p(Y_d)$, $n\in\N$, be the probability measure defined by
\beq\label{nun}
\int_{Y_d} f(y)\, d\nu_n(y) = \frac{1}{r_n} \int_0^{r_n} f(X(s,x))\,ds\quad\mbox{for }f\in C^0_\sharp(Y_d).
\eeq
Then, there exists a subsequence $(\nu_{n_k})_{k\in\N}$ of $(\nu_n)_{n\in\N}$ which converges weakly~$*$ to some probability measure $\mu\in \cM_p(Y_d)$ which is invariant for the flow $T_t$. 
\end{alem}
\par
Let $a$ be a limit point of the sequence $(u_n)_{n\in\N}$ \eqref{ung}, namely 
\[
a = \lim_{n\to\infty} \frac{1}{t_{\theta(n)}}\int_0^{t_{\theta(n)}} g(X(t,x))\,dt,
\]
for some subsequence $(t_{\theta(n)})_{n\in\N}$ of $(t_n)_{n\in\N}$.
Set $r_n := t_{\theta(n)}$, and consider the associated sequence $(\nu_n)_{n\in\N}$ of probability measures on $Y_d$ given by \eqref{nun}. 
By Lemma~\ref{lem-extrac} we can extract a subsequence $(\nu_{n_k})_{k\in\N}$ which converges weakly~$*$ to some invariant measure $\mu\in \cM_p(Y_d)$ for the flow~$T_t$.
We thus have
\[
\forall\,f\in C^0_\sharp(Y_d),\quad \lim_{k\to\infty} \int_{Y_d} f(y)\, d\nu_{n_k}(y) = \int_{Y_d} f(y)\, d\mu(y),
\]
which implies in particular that
\[
a = \lim_{k\to\infty}\frac{1}{r_{n_k}}\int_0^{r_{n_k}} g(X(s,x))\,ds
= \lim_{k\to\infty} \int_{Y_d} g(y)\, d\nu_{n_k}(y) = \int_{Y_d} g(y)\, d\mu(y).
\]
\end{proof}
\begin{arem}
The second assertion of Proposition~\ref{pro.invmeas} may provide a simple way to find the limit of Birkhoff's time averages for a specific function, when the general form of the limit in Birkhoff's theorem is unknown. For instance, if the support of any invariant probability measure for the flow is contained in a subset $F$ of $Y_d$ and if $g\in C^0_\sharp(Y_d)$ vanishes on $F$, then
\[
\forall\,x\in Y_d,\quad\lim_{t\r \infty}\left[{1\over t}\int_0^{t} g(X(s,x))\,ds\right]=0,
\]
since any limit point of these averages  is zero by \eqref{afmu}.
\end{arem}
\subsection{A criterium for the asymptotics of the flow}
It is known (see, {\em e.g.}, \cite[Theorem~2, Section~1.8]{CFS} for the discrete case) that the uniqueness of an invariant measure $\mu\in \cM_p(Y_d)$ for the flow $T_t$ is equivalent to the pointwise property:
\beq\label{xfX}
\forall\,x\in\R^d,\ \forall\,f\in C^0_\sharp(Y_d),\quad \lim_{t\to\infty}\left({1\over t}\int_0^t f(X(s,x))\,ds\right)=\int_{Y_d}f(y)\,d\mu(y).
\eeq
The following result which is new as the best we know, allows us to restrict condition~\eqref{xfX} to $f=b$ and to derive the asymptotics of the flow at each point in $\R^d$, by assuming the uniqueness of the averages of $b$ with respect to the invariant probability measures on $Y_d$ for the flow rather than the uniqueness of an invariant measure for the flow.
\begin{atheo}\label{thm.Cb}
Let $b\in C^1_\sharp(Y_d)^d$. Define the sets
\beq\label{IbCb}
\cI_b:=\big\{\mu\in\cM_p(Y_d): \mu\mbox{ is invariant for the flow }T_t\big\}\quad\mbox{and}\quad
C_b:=\left\{\int_{Y_d}b\,d\mu:\mu\in \cI_b\right\}.
\eeq
Then, $\cI_b$ is a nonempty set, and $C_b$ is a nonempty compact convex set of $\R^d$.
\par\noindent
Moreover, the following equivalence holds for any $\zeta\in\R^d$,
\beq\label{Cbsin}
C_b=\{\zeta\} \quad\Leftrightarrow\quad \forall\,x\in\R^d,\;\;\lim_{t\to\infty}{X(t,x)\over t}=\zeta.
\eeq
\end{atheo}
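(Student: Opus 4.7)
The plan is first to describe the structure of $C_b$, then to establish the equivalence by two separate arguments.

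For the structural claim, $\cI_b\neq\emptyset$ is the first assertion of Proposition~\ref{pro.invmeas}. The set $\cI_b$ is convex since a convex combination of invariant measures is invariant. For fixed $t\in\R$ and $\psi\in C^0_\sharp(Y_d)$, the function $T_t\psi$ lies in $C^0_\sharp(Y_d)$, so the condition $\int_{Y_d} T_t\psi\, d\mu=\int_{Y_d}\psi\, d\mu$ from \eqref{invmu} defines a weak-$*$ closed subset of $\cM_p(Y_d)$; intersecting over all $(t,\psi)$ shows $\cI_b$ is weak-$*$ closed, hence weak-$*$ compact inside $\cM_p(Y_d)$ (which itself is weak-$*$ compact by Banach--Alaoglu on the compact space $Y_d$). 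Since $\mu\mapsto\int_{Y_d} b\, d\mu$ is an affine map that is weak-$*$ continuous componentwise, $C_b$ is nonempty, compact and convex in $\R^d$.

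For the implication $(\Leftarrow)$, I assume $X(t,x)/t\to\zeta$ for every $x\in\R^d$ and take any $\mu\in\cI_b$. Integrating the identity
\[
\frac{X(t,x)-x}{t}=\frac{1}{t}\int_0^t b(X(s,x))\,ds
\]
against $\mu(dx)$ and applying Fubini together with the invariance relation \eqref{invmu} component by component gives
\[
\int_{Y_d}\frac{X(t,x)-x}{t}\,d\mu(x)=\frac{1}{t}\int_0^t\!\int_{Y_d} b(X(s,x))\,d\mu(x)\,ds=\int_{Y_d} b\,d\mu .
\]
The left-hand integrand is bounded by $\|b\|_\infty$ uniformly in $t$ and converges pointwise to $\zeta$, so dominated convergence forces $\int_{Y_d} b\,d\mu=\zeta$. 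Hence $C_b=\{\zeta\}$.

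For the implication $(\Rightarrow)$, fix $x\in\R^d$ and set $v(t):=t^{-1}\int_0^t b(X(s,x))\,ds=(X(t,x)-x)/t$, which takes values in the ball of radius $\|b\|_\infty$ in $\R^d$. Along any sequence $t_n\to\infty$, Bolzano--Weierstrass yields a subsequence along which $v$ converges to some $\eta\in\R^d$. Applying Lemma~\ref{lem-extrac} to this sequence $(t_n)$ extracts a further subsequence $(t_{n_k})$ along which the empirical measures $\nu_{n_k}$ defined by \eqref{nun} converge weakly-$*$ to a single invariant measure $\mu\in\cI_b$; this simultaneously forces convergence of $\int_{Y_d} b_i\,d\nu_{n_k}$ to $\int_{Y_d} b_i\,d\mu$ for every component $b_i$, so $\eta=\int_{Y_d} b\,d\mu\in C_b=\{\zeta\}$. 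Hence every limit point of $v(t)$ as $t\to\infty$ equals $\zeta$, and boundedness of $v$ promotes this to $v(t)\to\zeta$; adding $x/t\to 0$ yields the claimed limit $X(t,x)/t\to\zeta$.

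The only delicate step is ensuring in $(\Rightarrow)$ that one and the same invariant measure $\mu$ captures the limit of all components of $v$ simultaneously; this is exactly what Lemma~\ref{lem-extrac} delivers by providing a weak-$*$ limit valid against every continuous periodic test function at once, so no componentwise diagonal argument is actually needed beyond the lemma itself.
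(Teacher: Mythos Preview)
Your proof is correct and follows essentially the same strategy as the paper's: the $(\Leftarrow)$ direction is identical (integrate against $\mu\in\cI_b$, use Fubini, invariance, and dominated convergence), and for $(\Rightarrow)$ you appeal directly to Lemma~\ref{lem-extrac} to identify each limit point of the time averages with $\int_{Y_d} b\,d\mu$ for some $\mu\in\cI_b$, whereas the paper routes the same argument through Proposition~\ref{pro.invmeas} applied to the scalar function $g=b\cdot\xi$ for each direction $\xi$. Your observation that a single weak-$*$ limit measure handles all components of $b$ simultaneously is exactly right and makes the initial Bolzano--Weierstrass extraction redundant (though harmless); otherwise the two arguments coincide.
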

\begin{proof}{}
By virtue of Proposition~\ref{pro.invmeas} the sets $\cI_b$ and $C_b$ are nonempty. The set $C_b$ is a convex set in $\R^d$ since $\cI_b$ is clearly convex. Using the compactness of $\cM_p(Y_d)$ for the weak~$*$ topology, we get that $C_b$ is a closed set in $\R^d$. We also have $C_b\subset[0,\|b\|_\infty]$, so that $C_b$ is a compact set of~$\R^d$.
\par
Now, assume that $C_b=\{\zeta\}$.
Let  $x\in \R^d$, let $(t_n)_{n\in\N}\in\R^\N$ be such that $\lim_n t_n = \infty$, and define the sequence $(u_n)_{n\in\N}$ by \eqref{ung} with the function $g:=b\cdot\xi$ for $\xi\in\R^d$.
Let $a$ be a limit point of the sequence $(u_n)_{n\in\N}$.
By Proposition~\ref{pro.invmeas} there exists an invariant measure $\mu\in \cM_p(Y_d)$ for the flow $T_t$ satisfying
\[
a=\int_{Y_d} b(y)\cdot\xi\, d\mu(y),
\]
which by hypothesis implies that $a=\zeta\cdot\xi$. Hence, $\zeta\cdot\xi$ is the unique limit point of the bounded sequence $(u_n)_{n\in\N}$ which thus converges to $\zeta\cdot\xi$. Therefore, due to the arbitrariness of the sequence $(t_n)_{n\in\N}$ we obtain that
\[
\forall\,x\in\R^d,\quad \lim_{t\to\infty}{X(t,x)\over t}=\lim_{t\to\infty}\left({1\over t}\int_0^t b(X(s,x))\,ds\right)=\zeta.
\]
\par
Conversely, assume that the right-hand side of \eqref{Cbsin} holds, which implies that
\[
\forall\,x\in\R^d,\quad \lim_{t\to\infty}\left({1\over t}\int_0^t b(X(s,x))\,ds\right)=\zeta.
\]
Then, integrating over $Y_d$ the former equality with respect to any probability measure $\mu\in\cI_b$, then applying successively Lebesgue's dominated convergence theorem and Fubini's theorem, we get that
\[
\ba{ll}
\zeta & \dis =\lim_{t\to\infty}\int_{Y_d}\left({1\over t}\int_0^t b(X(s,x))\,ds\right)d\mu(x)
\\ \ecart
& \dis =\lim_{t\to\infty}{1\over t}\int_0^t\left(\int_{Y_d} b(X(s,x))\,d\mu(x)\right)ds
=\int_{Y_d} b(x)\,d\mu(x).
\ea
\]
which shows that $C_b=\{\zeta\}$. This concludes the proof of \eqref{Cbsin}.
\end{proof}
\subsection{Liouville's theorem and a divergence-curl result}
Liouville's theorem provides a criterium for a probability measure on a smooth compact manifold in $\R^d$ (see, {\em e.g.}, \cite[Theorem 1, Section~2.2]{CFS}) to be invariant for the flow.
The next result revisits this theorem in $\cM_p(Y_d)$ in association with a divergence-curl result on the torus.
\begin{atheo}\label{thm.divcurl}
Let $b\in C^1_\sharp(Y_d)^d$ and let $\mu\in\cM_p(Y_d)$. We define the Radon measure $\tilde{\mu}\in \cM(\R^d)$ on~$\R^d$ by
\beq\label{tmu}
\int_{\R^d}\ph(x)\,d\tilde{\mu}(x):=\int_{Y_d} \ph_\sharp(y)\,d\mu(y)\quad\mbox{where}\quad \ph_\sharp(\cdot):=\sum_{k\in\Z^d}\ph(\cdot+k)\quad\mbox{for }\ph\in C^0_c(\R^d).
\eeq
Then, $\mu$ is invariant for the flow $T_t$ if, and only if, one of the two following conditions is satisfied:
\beq\label{dtmub=0}
\div(\tilde{\mu}\,b)=0\quad\mbox{in }\cD'(\R^d),
\eeq
\beq\label{dmub=0}
\forall\,\psi\in C^1_\sharp(Y_d),\quad \int_{Y_d} b(y)\cdot\nabla\psi(y)\,d\mu(y)=0.
\eeq
\end{atheo}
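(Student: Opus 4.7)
The plan is to prove the chain of equivalences: invariance of $\mu$ under the flow $\Leftrightarrow$ the weak condition \eqref{dmub=0} $\Leftrightarrow$ the distributional condition \eqref{dtmub=0} on the lifted measure $\tilde\mu$. I would handle the two equivalences separately, starting with the Liouville-type statement invariance $\Leftrightarrow$ \eqref{dmub=0}, then passing to the periodization to get the distributional form.

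For invariance $\Rightarrow$ \eqref{dmub=0}, fix $\psi\in C^1_\sharp(Y_d)$ and set $F(t):=\int_{Y_d}\psi(X(t,y))\,d\mu(y)$. Invariance forces $F$ to be constant, and differentiating at $t=0$ under the integral sign (legitimate by compactness of $Y_d$ and the $C^1$-regularity of $X$), using $X(0,y)=y$ and $\partial_t X(0,y)=b(y)$, gives exactly \eqref{dmub=0}. For the converse, the task is to show $F'(t)=0$ for \emph{every} $t$, not just at $t=0$. First I would observe that $\psi_t(y):=\psi(X(t,y))$ belongs to $C^1_\sharp(Y_d)$: $\Z^d$-periodicity follows from \eqref{XxperY} and the periodicity of $\psi$, and $C^1$-regularity from that of $X(t,\cdot)$. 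Next, the classical flow identity
\[
\nabla_y X(t,y)\,b(y)=b(X(t,y)),
\]
proved by noting that both sides, as functions of $t$, satisfy the linearized ODE $w'(t)=\nabla b(X(t,y))\,w(t)$ with common initial value $b(y)$, yields via the chain rule
\[
b(y)\cdot\nabla\psi_t(y)=\big(\nabla_y X(t,y)\,b(y)\big)\cdot\nabla\psi(X(t,y))=b(X(t,y))\cdot\nabla\psi(X(t,y)).
\]
Thus $F'(t)=\int_{Y_d}b\cdot\nabla\psi_t\,d\mu$, which vanishes by \eqref{dmub=0} applied to $\psi_t$. So $F$ is constant, and \eqref{invmu} follows for any $\psi\in C^0_\sharp(Y_d)$ by uniform density of $C^1_\sharp(Y_d)$ in $C^0_\sharp(Y_d)$.

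For \eqref{dmub=0} $\Leftrightarrow$ \eqref{dtmub=0}, the essential observation is that since $b$ is $\Z^d$-periodic, the periodization operator commutes with $b\cdot\nabla$: for $\ph\in C^1_c(\R^d)$,
\[
(b\cdot\nabla\ph)_\sharp(y)=\sum_{k\in\Z^d}b(y+k)\cdot\nabla\ph(y+k)=b(y)\cdot\sum_{k\in\Z^d}\nabla\ph(y+k)=b(y)\cdot\nabla\ph_\sharp(y).
\]
Combined with the definition \eqref{tmu}, this gives $\langle\div(\tilde\mu\,b),\ph\rangle=-\int_{Y_d}b\cdot\nabla\ph_\sharp\,d\mu$, so \eqref{dmub=0} immediately implies \eqref{dtmub=0}. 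For the reverse implication, every $\psi\in C^1_\sharp(Y_d)$ can be written as $\ph_\sharp$ for some $\ph\in C^1_c(\R^d)$: choose $\chi\in C^\infty_c(\R^d)$ with $\chi_\sharp\equiv 1$ on $\R^d$ (available by a standard partition-of-unity subordinate to the cover $\{k+(-1,1)^d\}_{k\in\Z^d}$) and take $\ph:=\chi\,\psi$; then $\ph_\sharp=\psi\,\chi_\sharp=\psi$.

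The main obstacle is the converse in the first equivalence: upgrading the infinitesimal condition \eqref{dmub=0} to invariance at \emph{every} time $t$. The crucial ingredients that make this work are the identity $\nabla_y X(t,y)\,b(y)=b(X(t,y))$ and the stability of $C^1_\sharp(Y_d)$ under $\psi\mapsto\psi\circ X(t,\cdot)$; together they let us feed the test function $\psi_t$ back into the hypothesis at an arbitrary time. Once this step is secured, the periodization equivalence is essentially bookkeeping.
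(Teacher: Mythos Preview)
Your proof is correct. The differences from the paper's argument are organizational rather than substantive, but worth noting. The paper first establishes invariance $\Leftrightarrow$ \eqref{dtmub=0} by working on $\R^d$ with the lifted measure $\tilde\mu$, and only afterwards passes to the torus identity \eqref{dmub=0}; you reverse this order and work directly on $Y_d$ throughout the Liouville part. For the crucial converse step---showing the infinitesimal condition propagates to all times---the paper derives the transport equation $\partial_t\phi=b\cdot\nabla_x\phi$ for $\phi(t,x)=\ph(X(t,x))$ from the semi-group property \eqref{sgroup}, whereas you obtain the equivalent flow identity $\nabla_yX(t,y)\,b(y)=b(X(t,y))$ from the variational equation. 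These are two faces of the same fact, and both cleanly deliver $F'(t)=\int b\cdot\nabla\psi_t\,d\mu$. Your route has the modest advantage of avoiding the detour through $\tilde\mu$ for the main equivalence and of making the surjectivity of $\ph\mapsto\ph_\sharp$ self-contained via the partition-of-unity construction (the paper quotes this as Lemma~\ref{lem.phd} from \cite{Bri1}); the paper's route makes the distributional statement \eqref{dtmub=0} the central object, which is perhaps more natural if one views Liouville's theorem primarily as a PDE on $\R^d$.
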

\begin{proof}{}
Assume that $\mu$ is invariant for the flow, {\em i.e.} \eqref{invmu}.
Let $\ph\in C^1_c(\R^d)$. Since by \eqref{XxperY} we have for any $t\in\R$ and $y\in \R^d$,
\beq\label{phdph}
\big[\ph(X(t,\cdot))\big]_\sharp(y)=\sum_{k\in\Z^d}\ph(X(t,y+k))=\sum_{k\in\Z^d}\ph(X(t,y)+k)=\ph_\sharp(X(t,y)),
\eeq
it follows from \eqref{tmu} and the invariance of $\mu$ that
\[
\ba{ll}
\forall\,t\in\R, & \dis \int_{\R^d} \ph(X(t,x))\,d\tilde{\mu}(x)=\int_{Y_d} \big[\ph(X(t,\cdot))\big]_\sharp(y)\,d\mu(y)=\int_{Y_d} \ph_\sharp(X(t,y))\,d\mu(y)=
\\*[1.em]
& \dis \int_{Y_d} \ph_\sharp(y)\,d\mu(y)=\int_{\R^d} \ph(x)\,d\tilde{\mu}(x).
\ea
\]
Taking the derivative of the former expression with respect to $t$, we get that
\[
\forall\,t\in\R,\quad \int_{\R^d} b(X(t,x))\cdot\nabla\ph(X(t,x))\,d\tilde{\mu}(x)=0,
\]
which at $t=0$ yields
\beq\label{dtmub=02}
\forall\,\ph\in C^1_c(\R^d),\quad \int_{\R^d}b(x)\cdot\nabla\ph(x)\,d\tilde{\mu}(x)=0,
\eeq
namely the variational formulation of the distributional equation \eqref{dtmub=0}.
\par
Conversely, assume that equation \eqref{dtmub=0} holds true. Let $\ph\in C^1_c(\R^d)$ and define the function $\phi\in C^1(\R\times\R^d)$ by $\phi(t,x):=\ph(X(t,x))$.
By the semi-group property \eqref{sgroup} we have for any $s,t\in\R$ and $x\in\R^d$,
\[
\ba{ll}
\dis {\partial\over\partial s}\big(\phi(s+t,X(-s,x))\big) & \dis ={\partial\over\partial s}\big(\phi(t,x)\big)=0
\\ \ecart
& \dis ={\partial\phi\over\partial s}(s+t,X(-s,x))-b(X(-s,x))\cdot\nabla_x\phi(s+t,X(-s,x)),
\ea
\]
which at $s=0$ gives the classical transport equation
\beq\label{phib}
\forall\,t\in\R,\ \forall\,x\in\R^d,\quad{\partial\phi\over\partial t}(t,x)=b(x)\cdot\nabla_x\phi(t,x).
\eeq
Hence, since $\ph(X(t,\cdot))$ is in $C^1(\R^d)$ and has a compact support independent of $t$ when $t$ lies in a compact set of $\R$, we deduce from \eqref{phib} and \eqref{dtmub=0} that
\[
\forall\,t\in\R,\quad{d\over\,dt}\left(\int_{\R^d} \ph(X(t,x))\,d\tilde{\mu}(x)\right)=\int_{\R^d} b(x)\cdot\nabla_x\big(\ph(X(t,x))\big)\,d\tilde{\mu}(x)=0,
\]
or equivalently,
\[
\forall\,t\in\R,\quad \int_{\R^d} \ph(X(t,x))\,d\tilde{\mu}(x)=\int_{\R^d} \ph(x)\,d\tilde{\mu}(x).
\]
On the other hand, we have the following result.
\begin{alem}[\cite{Bri1}, Lemma~3.5]\label{lem.phd}
For any smooth $\Z^d$-periodic function $\psi\in C^\infty_\sharp(Y_d)$, there exists a smooth function with compact support $\ph\in C^\infty_c(\R^d)$ such that $\psi=\ph_\sharp$.
\end{alem}
Hence, using relation \eqref{phdph} and definition \eqref{tmu} we get that for any $\psi\in C^\infty_\sharp(Y)$,
\[
\ba{ll}
\forall\,t\in\R, & \dis \int_{Y_d} \psi(X(t,y))\,d\mu(y)=\int_{Y_d} \ph_\sharp(X(t,y))\,d\mu(y)=\int_{\R^d} \ph(X(t,x))\,d\tilde{\mu}(x)=
\\*[1.em]
&  \dis \int_{\R^d} \ph(x)\,d\tilde{\mu}(x)=\int_{Y_d} \psi(y)\,d\mu(y),
\ea
\]
which shows that $\mu$ is invariant for the flow. We have just proved the equivalence between the invariance of $\mu$ for the flow and the distributional equation \eqref{dtmub=0} satisfied by $\tilde{\mu}$.
\par
Finally, the equivalence between \eqref{dtmub=0}, or equivalently \eqref{dtmub=02}, and \eqref{dmub=0} is a straightforward consequence of the relation
\[
\forall\,\ph\in C^1_c(\R^d),\quad \int_{\R^d}b(x)\cdot\nabla\ph(x)\,d\tilde{\mu}(x)=\int_{Y_d} b(y)\cdot\nabla\ph_\sharp(y)\,d\mu(y)
\]
which itself follows from $[b\cdot\nabla\ph]_\sharp=b\cdot\nabla\ph_\sharp$ and \eqref{tmu}, combined with Lemma~\ref{lem.phd}.
\end{proof}
\begin{arem}\label{rem.divcurl}
Equation \eqref{dmub=0} can be considered as the divergence free of the vector-valued measure $\mu\,b$ in the torus $Y_d$, while equation \eqref{dtmub=0} is exactly the divergence free of the vector-valued measure $\tilde{\mu}\,b$ in the space $\R^d$. Equation \eqref{dmub=0} is also equivalent to
\beq\label{divcurl}
\forall\,\nabla\psi\in C^0_\sharp(Y_d)^d,\quad \int_{Y_d} b(y)\cdot\nabla\psi(y) \,d\mu(y)=\left(\int_{Y_d} b(y)\,d\mu(y)\right)\cdot
\left(\int_{Y_d} \nabla\psi(y)\,dy\right),
\eeq
since
\[
\nabla\psi\in C^0_\sharp(Y_d)^d\;\Leftrightarrow\; \left(x\mapsto \psi(x)-x\cdot{ \int_{Y_d} \nabla\psi(y)\,dy}\right)\in C^1_\sharp(Y_d).
\]
Condition \eqref{divcurl} has to be regarded as a divergence-curl result combining the invariant measure $\mu$ for the divergence free vector field $\mu\,b$ and the Lebesgue measure for the gradient field $\nabla\psi$.
\end{arem}
\section{Application to the asymptotics of the flow}
First of all, we apply the tools of Section~\ref{s.genres} to the one-dimensional case.
\subsection{The one-dimensional case}
We have the following result.
\begin{apro}\label{pro.1D}
Let $b\in C^1_\sharp(Y_1)$. We have the following alternative:
\begin{enumerate}[$(i)$]
\item If $b\neq 0$ sur $Y_1$, then $\dis {\underline{b}/b(y)}\,dy$ is the unique invariant measure for the flow $T_t$ associated with $b$, and
\beq\label{asyXb­0}
\forall\,x\in\R,\quad \lim_{t\to\infty} \frac{X(t,x)}{t} = \underline{b}:=\left(\int_{Y_1}{dy\over b(y)}\right)^{-1}.
\eeq
\item If $b$ does vanish in $Y_1$, then 
\beq\label{asyXb=0}
\forall\,x\in\R,\quad \lim_{t\to\infty} \frac{X(t,x)}{t} = 0.
\eeq
\end{enumerate}
\end{apro}
\begin{proof}{}
\par\noindent
{\it Case $(i)$.} Let $\mu\in\cM_p(Y_1)$ be an invariant measure for the flow $T_t$.
By the condition \eqref{dtmub=0} of Theorem~\ref{thm.divcurl} there exists a constant $C\in\R$ such that $\tilde{\mu}\,b=C$ in $\R$, or equivalently,
\[
\forall\,\ph\in C^0_c(\R),\quad \int_{\R}\ph(x)\,b(x)\,d\tilde{\mu}(x)=C\int_{\R}\ph(x)\,dx.
\]
Hence, by the definition \eqref{tmu} of $\tilde{\mu}$ we have
\[
\forall\,\ph\in C^0_c(\R),\quad \int_{Y_1}\ph_\sharp(y)\,b(y)\,d\mu(y)=\int_{\R}\ph(x)\,b(x)\,d\tilde{\mu}(x)
=C\int_{\R}\ph(x)\,dx=C\int_{Y_1}\ph_\sharp(y)\,dy.
\]
Therefore, from Lemma~\ref{lem.phd} we deduce that
\beq\label{psiC}
\forall\,\psi\in C^0_\sharp(Y_1),\quad \int_{Y_1}\psi(y)\,b(y)\,d\mu(y)=C\int_{Y_1}\psi(y)\,dy.
\eeq
Taking $\psi=1/b$ in \eqref{psiC} we get that $C=\underline{b}$, which implies that $\mu(dy)={\underline{b}/b(y)}\,dy$ is thus the unique invariant measure for the flow.
\par
As a consequence the set $\cI_b$ defined by \eqref{IbCb} is a singleton, and $C_b=\{\underline{b}\}$. 
Therefore, due to the equivalence \eqref{Cbsin} of Theorem~\ref{thm.Cb} we obtain the desired asymptotics \eqref{asyXb­0}.
We could also have concluded directly thanks to the unique ergodicity theorem. However, the unique ergodicity theorem does not apply in the following case, while Theorem~\ref{thm.Cb} does.
\par\medskip\noindent
{\it Case $(ii)$.} Let $\mu\in\cM_p(Y_1)$ be an invariant measure for the flow $T_t$. The equality $\tilde{\mu}\,b=C$ still holds true in $\R$ for some constant $C\in\R$, as well as equality \eqref{psiC}.
Take the function $\psi:=(|b|+\ep)^{-1}$ for $\ep>0$, in equality \eqref{psiC}, and make $\ep$ tend to $0$. Then, using that $b$ is regular and vanishes in $Y_1$ (which implies that $1/b\notin L^1_\sharp(Y_d)$), and applying successively Beppo-Levi's theorem and \eqref{psiC}, it follows that
\beq\label{bmu=a1}
|C|\times \infty=|C|\times\int_{Y_1}{dy\over |b(y)|}=\lim_{\ep\to 0}\left|\,\int_{Y_1}{C\,dy\over |b(y)|+\ep}\,\right|
=\liminf_{\ep\to 0}\left|\,\int_{Y_1}{b(y)\over |b(y)|+\ep}\,d\mu(y)\,\right|\leq 1,
\eeq
which implies that $C=0$.
Finally, taking $\psi=1$ in the equality \eqref{psiC} with $C=0$, leads us to $C_b=\{0\}$, which by virtue of the equivalence \eqref{Cbsin} of Theorem~\ref{thm.Cb} yields the asymptotics of the flow \eqref{asyXb=0}.
\par
As a by-product, using Lebesgue's dominated convergence theorem in the third integral of~\eqref{bmu=a1}, we also get the equality
\beq\label{bmu=a2}
\int_{\{b(y)\neq 0\}}{b(y)\over |b|}\,d\mu(y)=0.
\eeq
\end{proof}
\subsection{The rectifiable case}
\subsubsection{The case with a fixed direction}
This section deals with the case where the vector field $b$ has a fixed direction, namely
\beq\label{baxi}
b(y)=a(y)\,\xi,\quad y\in Y_d,
\eeq
for some nonnegative function $a\in C^1_\sharp(Y_d)$ which may vanish, and for a given vector $\xi\in\R^d$ with $|\xi|=1$.
\par
We have the following result.
\begin{apro}\label{pro.asyxi}
Let $b\in C^1_\sharp(Y_d)^d$ be given by \eqref{baxi} with $a\geq 0$ and $\xi\in\R^d$ with $|\xi|=1$.
\begin{enumerate}[$(i)$]
\item If $\xi\cdot k\neq 0$ for any $k\in\Z^d\setminus\{0\}$, then we have
\beq\label{asyXaxi}
\forall\,x\in\R^d,\quad\lim_{t\to\infty} \frac{X(t,x)}{t}=a^*\xi\quad\mbox{with}\quad
a^*:=\left\{\ba{ll}
\dis \underline{a} & \mbox{if $a>0$ in }Y_d
\\ \ecart
\dis 0 & \mbox{if $a$ vanishes in }Y_d,
\ea\right.
\eeq
where $\underline{a}$ is the harmonic mean of $a$.
\item If there exists $T>0$ such that $T\xi\in\Z^d$, then we have
\beq\label{asyXaxiT}
\ba{c}
\dis \forall\,x\in\R^d,\quad \lim_{t\to\infty} \frac{X(t,x)}{t}=a^*(x)\,\xi\quad\mbox{with}
\\ \ecart
\dis a^*(x):=\left\{\ba{cl}
\dis \left({1\over T}\int_0^T\kern -.2cm{ds\over a\big(s\xi+\Pi_{\xi^\perp}(x)\big)}\right)^{-1}\kern -.1cm\xi
& \mbox{if }\ \forall\,u\in\R,\ a\big(u\xi+\Pi_{\xi^\perp}(x)\big)\neq 0
\\ \ecart
0 & \mbox{if }\ \exists\,u\in\R,\ a\big(u\xi+\Pi_{\xi^\perp}(x)\big)=0,
\ea\right.
\ea
\eeq
where $\Pi_{\xi^\perp}$ denotes the projection on the hyperplane $\xi^\perp$ orthogonal to $\xi$.
\end{enumerate}
\end{apro}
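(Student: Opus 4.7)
The plan is to treat the two cases with different techniques. In case (i), I apply Theorem~\ref{thm.Cb} after computing $C_b$ via the divergence-curl identity of Theorem~\ref{thm.divcurl}. In case (ii), I reduce the flow to a one-dimensional problem and invoke Proposition~\ref{pro.1D} directly.

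For case (i), fix any invariant measure $\mu\in\cI_b$. Since $b=a\xi$, identity~\eqref{dmub=0} reads $\int_{Y_d} a(y)\,\xi\cdot\nabla\psi(y)\, d\mu(y)=0$ for every $\psi\in C^1_\sharp(Y_d)$. Testing against the trigonometric polynomials $\psi_k(y)=e^{2\pi i k\cdot y}$ for each $k\in\Z^d\setminus\{0\}$ and using the hypothesis $\xi\cdot k\neq 0$, I get that every non-zero Fourier coefficient of the finite non-negative measure $a\mu$ vanishes, so $a(y)\,d\mu(y)=c\,dy$ on $Y_d$, with $c:=\int_{Y_d}a\,d\mu\geq0$, and therefore $\int b\,d\mu=c\xi$. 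When $a>0$, the resulting density $d\mu=(c/a)\,dy$ combined with $\mu(Y_d)=1$ forces $c=\underline{a}$. When $a$ vanishes at some $y_0$, I mimic Proposition~\ref{pro.1D} case~(ii): substituting $\psi_\epsilon:=(a+\epsilon)^{-1}$ into $\int a\,\psi_\epsilon\,d\mu=c\int\psi_\epsilon\,dy$, the left-hand side stays $\leq 1$ whereas the right-hand side blows up as $\epsilon\downarrow 0$, because the non-negativity and $C^1$-regularity of $a$ force $\nabla a(y_0)=0$, hence a sufficient degeneracy to ensure $\int_{Y_d} dy/a=+\infty$ by Beppo-Levi; it follows that $c=0$. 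In both sub-cases $C_b=\{a^*\xi\}$, and Theorem~\ref{thm.Cb} delivers~\eqref{asyXaxi}.

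For case (ii), since $b=a\xi$ the trajectory stays on the affine line through $x$ in direction $\xi$: writing $X(t,x)=x+\tau(t)\xi$, the scalar $\tau:\R\to\R$ satisfies $\tau'(t)=\alpha(\tau(t))$, $\tau(0)=0$, with $\alpha(u):=a(x+u\xi)$. The hypothesis $T\xi\in\Z^d$ together with the $\Z^d$-periodicity of $a$ makes $\alpha$ a $T$-periodic non-negative function of class $C^1$ on $\R$, so after rescaling $\tilde\tau:=\tau/T$ I fall exactly into the setting of Proposition~\ref{pro.1D} on $Y_1$ applied to $\tilde\alpha(s):=\alpha(Ts)/T$. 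If $\alpha>0$ on $[0,T)$ (i.e.\ the line through $x$ in direction $\xi$ avoids every zero of $a$), case~(i) of that proposition gives $\tau(t)/t\to T\bigl(\int_0^T ds/\alpha(s)\bigr)^{-1}$, which coincides with the stated formula for $a^*(x)$ after a $T$-periodic change of integration variable expressing $\alpha$ in terms of $\Pi_{\xi^\perp}(x)$. If $\alpha$ vanishes somewhere on $[0,T)$, then $\tau$ is non-decreasing and blocked below its first zero, hence bounded, so $\tau(t)/t\to 0$. In either case, $X(t,x)/t=x/t+(\tau(t)/t)\xi\to a^*(x)\xi$.

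The main technical obstacle is establishing $c=0$ in the vanishing sub-case of case~(i): beyond the Fourier reduction $a\mu=c\,dy$, one has to transfer the one-dimensional divergence $\int dy/a=+\infty$ used in Proposition~\ref{pro.1D} to arbitrary dimension, exploiting the fact that every zero of the non-negative $C^1$ function $a$ is necessarily a critical point. All remaining computations, including the Fourier inversion, the normalization argument in the positive case, and the one-dimensional ODE analysis underlying case (ii), are then routine.
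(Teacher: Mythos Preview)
Your approach to case~(i) is genuinely different from the paper's. The paper first treats $a>0$ by a direct ODE computation: it writes $X(t,x)\cdot\xi = F_x^{-1}(t+F_x(x\cdot\xi))$ with $F_x(t)=\int_0^t ds/a(s\xi+\Pi_{\xi^\perp}(x))$ and uses Fej\'er/Weyl equidistribution along the line $x+\R\xi$ to obtain $F_x(t)/t\to 1/\underline{a}$, without invoking Theorem~\ref{thm.Cb} at all. For the vanishing sub-case it then argues by contradiction via a perturbation: assuming $\int a\,d\mu>0$, it builds $\mu_\epsilon\propto(a/a_\epsilon)\,\mu$, checks $\mu_\epsilon\in\cI_{a_\epsilon\xi}$, and applies the already-established positive case to $a_\epsilon=a+\epsilon$ to force $\int a\,d\mu\le\underline{a_\epsilon}\to0$. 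Your Fourier route---testing~\eqref{dmub=0} against $e^{2\pi ik\cdot y}$ and using $\xi\cdot k\neq0$ to conclude $a\mu=c\,dy$ for every $\mu\in\cI_b$---is more uniform and arguably closer to the paper's own philosophy (Theorems~\ref{thm.Cb} and~\ref{thm.divcurl}): it handles both sub-cases at once, bypassing the explicit ODE analysis and the $\mu_\epsilon$ construction. Both arguments, however, ultimately hinge on the claim $\int_{Y_d}dy/a=+\infty$ when $a$ vanishes; you rightly isolate this as the crux, but your proposed justification ($\nabla a(y_0)=0$ at a zero) is the one-dimensional mechanism and does not by itself force non-integrability of $1/a$ when $d\ge2$ (think of $a(y)\sim|y-y_0|^{2}$ in $d\ge3$)---the paper asserts the same implication without further argument. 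For case~(ii), your reduction to a one-dimensional flow along $x+\R\xi$ and appeal to Proposition~\ref{pro.1D} is exactly the paper's argument.
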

\begin{arem}
The two-dimensional framework is completely covered by the disjoint cases $(i)$ and $(ii)$ of Proposition~\ref{pro.asyxi}, since
\[
\exists\,k\in\Z^d\setminus\{0\},\;\;\xi\cdot k=0\;\Leftrightarrow\; \exists\,T>0,\;\;T\xi\in\Z^2.
\]
This equivalence does not hold in higher dimension.
\end{arem}
\begin{proof}{ of Proposition~\ref{pro.asyxi}}
\par\noindent
{\it Case $(i)$}. Assume that for any $k\in\Z^d\setminus\{0\}$, $\xi\cdot k\neq 0$.
\par
First assume that $a$ does not vanish in $Y_d$. Let $x\in\R^d$. Since the vector field $b$ is parallel to the fixed direction $\xi$, we have
\beq\label{Pixio}
\forall\,t\in\R,\quad X(t,x)=\big(X(t,x)\cdot\xi\big)\,\xi+\Pi_{\xi^\perp}\big(X(t,x)\big) \quad\mbox{and}\quad \Pi_{\xi^\perp}\big(X(t,x)\big)=\Pi_{\xi^\perp}(x),
\eeq
which implies that
\beq\label{X.xi}
\left\{\ba{ll}
\dis {\partial (X\cdot\xi)\over\partial t}(t,x)=a\big((X\cdot\xi)(t,x)\,\xi+\Pi_{\xi^\perp}(x)\big), & t\in\R
\\ \ecart
(X\cdot\xi)(0,x)=x\cdot\xi. &
\ea\right.
\eeq
Then, the solution $X\cdot\xi$ to equation \eqref{X.xi} is given by
\beq\label{Fx}
\forall\,t\in\R,\quad X(t,x)\cdot\xi=F_x^{-1}\big(t+F_x(x\cdot\xi)\big)\quad\mbox{where}\quad
F_x(t):=\int_0^t {ds\over a\big(s\,\xi+\Pi_{\xi^\perp}(x)\big)}.
\eeq
By approximating in $C^0_\sharp(Y_d)$ the continuous $\Z^d$-periodic function $1/a$ by Fej\'er's type trigonometric polynomials, and noting that by hypothesis
\[
\forall\,k\in \Z^d\setminus\{0\},\quad \lim_{t\to \infty}\left({1\over t}\int_0^t e^{-2i\pi\,s\,\xi\cdot k}\,ds\right)=0,
\]
it follows that
\[
\lim_{t\to \infty}{F_x(t)\over t}=\int_{Y_d}{dy\over a(y)}={1\over \underline{a}},
\]
which taking into account that $\dis F_x\big(X(t,x)\cdot\xi\big)\mathop{\sim}_{t\to\infty}t$ with $X(t,x)\cdot\xi\to\infty$ as $t\to\infty$, implies that
\[
\lim_{t\r\infty} \frac{X(t,x)\cdot\xi}{t}=\underline{a}.
\]
Therefore, we get that
\[
\forall\,x\in\R^d,\quad \lim_{t\r\infty} \frac{X(t,x)}{t}=\lim_{t\r\infty} {\big(X(t,x)\cdot\xi\big)\,\xi+\Pi_{\xi^\perp}(x)\over t}=\underline{a}\,\xi.
\]
\par
Otherwise, if $a$ vanishes in $Y_d$ let us prove that
\[
\forall\,\mu\in\cI_{a\xi},\quad \int_{Y_d} a(y)\,d\mu(y)=0,
\]
or equivalently, $C_{a\xi}=\{0\}$. Then, Theorem~\ref{thm.Cb} will allow us to conclude.
Assume by contradiction that there exists a measure $\mu\in\cI_{a\xi}$ such that
\[
\int_{Y_d} a(y)\,d\mu(y)>0.
\]
Set $a_\ep:=a+\ep$ for $\ep>0$, and define the probability measure (recall that $a\geq 0$)
\beq\label{muep}
d\mu_\ep(x):=\left(\int_{Y_d}{a(y)\over a_\ep(y)}\,d\mu(y)\right)^{-1}{a(x)\over a_\ep(x)}\,d\mu(x),
\eeq
which is well defined since
\[
\int_{Y_d} {a(y)\over a_\ep(y)}\,d\mu(y)\geq {1\over \|a\|_\infty+\ep}\int_{Y_d} a(y)\,d\mu(y)>0.
\]
Since $\mu$ is invariant for the flow associated with $a\,\xi$, by equality \eqref{dmub=0} we have
\[
\forall\,\ph\in C^1_\sharp(Y_d),\quad \int_{Y_d} a_\ep(x)\,\xi\cdot\nabla\ph(x)\,d\mu_\ep(x)
=\left(\int_{Y_d}{a(y)\over a_\ep(y)}\,d\mu(y)\right)^{-1}\int_{Y_d} a(x)\,\xi\cdot\nabla\ph(x)\,d\mu(x)=0,
\]
hence $\mu_\ep\in\cI_{a_\ep\xi}$. But from the former case $a>0$ combined with Theorem~\ref{thm.Cb} we deduce that
\[
C_{a_\ep\xi}=\left\{\int_{Y_d}a_\ep(y)\,\mu_\ep(dy)\,\xi\right\}=\{\underline{a_\ep}\,\xi\}\quad\mbox{or equivalently}\quad
\int_{Y_d}a_\ep(y)\,\mu_\ep(dy)=\underline{a_\ep}.
\]
This combined with the expression of $a_\ep(x)\,d\mu_\ep(x)$ given by \eqref{muep} leads us to the equality
\[
\left(\int_{Y_d}{a(y)\over a_\ep(y)}\,d\mu(y)\right)\underline{a_\ep}
=\left(\int_{Y_d}{a(y)\over a_\ep(y)}\,d\mu(y)\right)\left(\int_{Y_d}a_\ep(x)\,\mu_\ep(dx)\right)=\int_{Y_d}a(x)\,\mu(dx).
\]
Then, applying Beppo-Levi's theorem and using that $a$ is regular and vanishes in $Y_d$ (which implies that $1/a\notin L^1_\sharp(Y_d)$), it follows that
\[
0<\int_{Y_d}a(x)\,\mu(dx)=\left(\int_{Y_d}{a(y)\over a_\ep(y)}\,d\mu(y)\right)\underline{a_\ep}
\leq \underline{a_\ep}\;\mathop{\longrightarrow}_{\ep\to 0}\;
\left(\int_{Y_d}{dy\over a(y)}\right)^{-1}=0,
\]
which yields a contradiction.
Therefore, we get that $C_{a\xi}=\{0\}$, and by the convergence \eqref{Cbsin} of Theorem~\ref{thm.Cb} we obtain that
\[
\forall\,x\in\R^d,\quad \lim_{t\r\infty} \frac{X(t,x)}{t}=0.
\]
\par\medskip\noindent
{\it Case $(ii)$}.  Now, assume that there exists $T>0$ such that $T\xi\in\Z^d$.
Let $x\in\R^d$. We have to distinguish two cases.
\par
If $a\big(u\xi+\Pi_{\xi^\perp}(x)\big)>0$ for any $u\in\R$, then by the $T$-periodicity of $u\mapsto a\big(u\xi+\Pi_{\xi^\perp}(x)\big)$ we have
\[
\lim_{t\to \infty}{F_x(t)\over t}={1\over T}\int_0^T{ds\over a\big(s\xi+\Pi_{\xi^\perp}(x)\big)},
\]
hence
\[
\lim_{t\r\infty} \frac{X(t,x)}{t}=\left({1\over T}\int_0^T{ds\over a\big(s\xi+\Pi_{\xi^\perp}(x)\big)}\right)^{-1}\kern -.1cm\xi.
\]
\par
On the contrary, if there exists $u_0\in\R$ such that $a\big(u_0\xi+\Pi_{\xi^\perp}(x)\big)=0$, then the part $(ii)$ of Proposition~\ref{pro.1D} applies to the one-dimensional solution $X\cdot\xi$ to \eqref{X.xi}, where $u\mapsto a\big(u\xi+\Pi_{\xi^\perp}(x)\big)$ is $T$-periodic and vanishes at the point $u_0$. We thus deduce that
\[
\lim_{t\to \infty}{X(t,x)\cdot\xi\over t}=0,\quad\mbox{then}\quad \lim_{t\to \infty}{X(t,x)\over t}=0,
\]
which concludes the proof.
\end{proof}
\subsubsection{Rectification to a fixed direction}
In this section we extend the result of the former section thanks to a diffeomorphism on the torus.
\begin{adef}\label{def.diffY}
A mapping $\Phi\in C^1(\R^d)^d$ is said to be a $C^1$-diffeomorphism on the torus if $\Phi$ satisfies the following conditions:
\begin{itemize}
\item $\det(\Phi(x))\neq 0$ for any $x\in\R^d$,
\item there exist a matrix $A\in\Z^{d\times d}$ with $|\det(A)|=1$, and a $\Z^d$-periodic mapping $\Phi_\sharp\in C^1_\sharp(Y_d)^d$ such that
\beq\label{PhAPhd}
\forall\, x\in\R^d, \quad \Phi(x)=Ax+\Phi_\sharp(x).
\eeq
\end{itemize}
\end{adef}
Note that the invertibility of $A$ and the periodicity of $\Phi_\sharp$ in \eqref{PhAPhd} imply that $\Phi$ is a proper function ({\em i.e.}, the inverse image by the function of any compact set in $\R^d$ is a compact set). Hence, by virtue of Hadamard-Caccioppoli's theorem \cite{Cac} (also called Hadamard-L\'evy's theorem) the mapping $\Phi$ is actually a $C^1$-diffeomorphism on $\R^d$. Also noting that due to $A^{-1}\in\Z^{d\times d}$, we have
\[
\forall\,k\in\Z^d,\ \forall\,x\in\R^d,\quad
\left\{\ba{ll}
\Phi(x+k)-\Phi(x)=Ak & \in\Z^d
\\ \ecart
\Phi^{-1}(x+k)-\Phi^{-1}(x)=A^{-1}k & \in\Z^d,
\ea\right.
\]
$\Phi$ well defines an isomorphism on the torus.
\par
Using a diffeomorphism on the torus the result of Proposition~\ref{pro.asyxi} can be extended to the following general result.
\begin{acor}\label{cor.bPhaxi}
Consider a vector field $b$ such that there exist a $C^1$-diffeomorphism $\Phi$ on the torus given by \eqref{PhAPhd}, a nonnegative function $a\in C^1_\sharp(Y_d)^d$ and a vector $\xi\in\R^d$ with $|\xi|=1$, such that
\beq\label{bPhaxi}
\forall\,x\in\R^d,\quad b(x)=a(\Phi(x))\,\nabla\Phi(x)^{-1}\xi.
\eeq
Then, if $\xi$ satisfies one of the two conditions of Proposition~\ref{pro.asyxi} we get that
\beq\label{asyXa*Axi}
\forall\,x\in\R^d,\quad\lim_{t\to\infty} \frac{X(t,x)}{t}=a^*(\Phi(x))\,A^{-1}\xi,
\eeq
where $a^*(y)$ is given by formula~\eqref{asyXaxi} or formula~\eqref{asyXaxiT}.
\end{acor}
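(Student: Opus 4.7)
The plan is to push the whole problem onto the fixed direction dynamical system already handled by Proposition~\ref{pro.asyxi}, using $\Phi$ as a change of variable. I would set $Y(t,x) := \Phi(X(t,x))$ and compute, by the chain rule and the assumed form~\eqref{bPhaxi} of $b$,
\[
\frac{\partial Y}{\partial t}(t,x) = \nabla\Phi(X(t,x))\,b(X(t,x)) = a(\Phi(X(t,x)))\,\nabla\Phi(X(t,x))\,\nabla\Phi(X(t,x))^{-1}\xi = a(Y(t,x))\,\xi.
\]
Combined with $Y(0,x) = \Phi(x)$, this shows that $Y(\cdot,x)$ is exactly the solution of the dynamical system $Y' = a(Y)\,\xi$ starting from $\Phi(x)$. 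Since $a \in C^1_\sharp(Y_d)$ is nonnegative and $\xi$ satisfies one of the two conditions of Proposition~\ref{pro.asyxi}, that proposition applies and yields
\[
\lim_{t\to\infty}\frac{Y(t,x)}{t} = a^*(\Phi(x))\,\xi,
\]
where $a^*(\Phi(x))$ is given by formula~\eqref{asyXaxi} in case~$(i)$, and by formula~\eqref{asyXaxiT} in case~$(ii)$ (the dependence of $a^*$ on $\Pi_{\xi^\perp}$ of the initial point is now evaluated at $\Phi(x)$, which is precisely how it is stated in~\eqref{asyXa*Axi}).

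It remains to transfer the asymptotics from $Y$ back to $X$. For this I would use the decomposition $\Phi(x) = Ax + \Phi_\sharp(x)$ from Definition~\ref{def.diffY}. Since $\Phi_\sharp \in C^1_\sharp(Y_d)^d$ is bounded on $\R^d$, we have $\Phi_\sharp(X(t,x))/t \to 0$ as $t \to \infty$, whence
\[
\frac{Y(t,x)}{t} = A\,\frac{X(t,x)}{t} + \frac{\Phi_\sharp(X(t,x))}{t}.
\]
Combined with the convergence of $Y(t,x)/t$, this implies the convergence of $A\,X(t,x)/t$, and since $A \in \Z^{d\times d}$ has $|\det A| = 1$ and is therefore invertible, the convergence of $X(t,x)/t$ itself, with
\[
\lim_{t\to\infty}\frac{X(t,x)}{t} = A^{-1}\lim_{t\to\infty}\frac{Y(t,x)}{t} = a^*(\Phi(x))\,A^{-1}\xi,
\]
which is the desired identity~\eqref{asyXa*Axi}.

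There is no genuine obstacle in this argument: the only point worth double-checking is that the rectified vector field $a\,\xi$ indeed lies in $C^1_\sharp(Y_d)^d$ so that Proposition~\ref{pro.asyxi} genuinely applies, which is immediate from $a \in C^1_\sharp(Y_d)$, and that the $\Z^d$-equivariance $\Phi(x+k) - \Phi(x) = Ak \in \Z^d$ from Definition~\ref{def.diffY} makes $Y(t,x)$ well defined as a flow on the torus, so that the initial point $\Phi(x)$ in Proposition~\ref{pro.asyxi} can indeed be any point of $\R^d$.
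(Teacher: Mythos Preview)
Your proposal is correct and follows essentially the same route as the paper: define $Y=\Phi\circ X$, verify via the chain rule and \eqref{bPhaxi} that $Y$ solves $Y'=a(Y)\,\xi$ with initial point $\Phi(x)$, apply Proposition~\ref{pro.asyxi}, and then use the decomposition $\Phi=A\cdot+\Phi_\sharp$ with $\Phi_\sharp$ bounded to pass from $Y(t,x)/t$ back to $X(t,x)/t$. The only cosmetic difference is that the paper writes the last step as $X(t,x)/t=A^{-1}[Y(t,y)/t]+O(1)/t$ via $\Phi^{-1}$, whereas you argue directly from $Y=AX+\Phi_\sharp(X)$; these are the same computation.
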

\begin{arem}
A similar result as Corollary~\ref{cor.bPhaxi} was proved by Tassa \cite[Theorem~2.1]{Tas} in dimension two assuming that the first coordinate $b_1$ of $b$ does not vanish in $\R^2$, and that there exists an invariant measure for the flow associated with $b$, having a density in $C^1_\sharp(Y_2)$ with respect to Lebesgue's mesure. More precisely, under these two conditions Tassa showed in the spirit of Kolmogorov's theorem the existence of a $C^1$-diffeomorphism on the torus satisfying \eqref{bPhaxi} with the matrix $A=I_2$. In the present case, we only assume the weaker rectification formula \eqref{bPhaxi} in any dimension $d\geq 1$, with any diffeomorphism $\Phi$ on the torus and a possibly vanishing nonnegative function $a$.
\par
Conversely, consider a vector field $b\in C^1_\sharp(Y_d)^d$ satisfying \eqref{bPhaxi} with $a>0$. By making the change of variables $\ph(x)=\psi(y)$ with $y=\Phi(x)$ for any function $\ph\in C^\infty_c(\R^d)$, it is easy to check that
\[
\int_{\R^d}{\det(\nabla\Phi(x))\over a(\Phi(x))}\,b(x)\cdot\nabla\ph(x)\,dx=\int_{\R^d}\xi\cdot\nabla\psi(y)\,dy=0,
\]
which implies that
\beq\label{siaPhi}
\div(\sigma b)=0\;\;\mbox{in }\R^d\quad\mbox{with}\quad \sigma:=\left(\int_{Y_d}{\det(\nabla\Phi(y))\over a(\Phi(y))}\,dy\right)^{-1}{\det(\nabla\Phi)\over a\circ \Phi}>0.
\eeq
Since the function $\sigma$ is $\Z^d$-periodic, by virtue of Theorem~\ref{thm.divcurl} \eqref{siaPhi} means that the density probability measure $\sigma(x)\,dx$ is invariant for the flow associated with $b$.
\par
Therefore, the rectification formula \eqref{bPhaxi} of a nonvanishing vector field $b$ can be regarded as a (more restrictive) substitute to \eqref{siaPhi} in dimension $d>2$ for which Kolmogorov's theorem does not apply.
\end{arem}
\begin{proof}{ of Corollary~\ref{cor.bPhaxi}}
Define
\beq\label{YPhX}
Y(t,y):=\Phi(X(t,x))\;\;\mbox{for }(t,x)\in\R\times\R^d\quad\mbox{with}\quad y:=\Phi(x).
\eeq
By the chain rule and \eqref{bPhaxi} we have
\[
\left\{\ba{l}
\dis {\partial Y\over\partial t}(t,y)=\nabla\Phi(X(t,x))\,{\partial X\over\partial t}(t,x)=\nabla\Phi(X(t,x))\,b(X(t,x))=a(\Phi(X(t,x)))\,\xi=a(Y(t,y))\,\xi
\\ \ecart
\dis Y(0,y)=\Phi(X(0,x))=\Phi(x)=y,
\ea\right.
\]
which shows that $Y$ is the solution to the dynamical system associated with the vector field~$a\,\xi$.
Hence, if $\xi$ satisfies one of the two conditions of Proposition~\ref{pro.asyxi}, we get the asymptotics of the flow
\[
\forall\,y\in\R^d,\quad\lim_{t\to\infty} \frac{Y(t,y)}{t}=a^*(y)\,\xi,
\]
where $a^*(y)$ is given by formula~\eqref{asyXaxi} or formula~\eqref{asyXaxiT}.
Therefore, by the boundedness of $\Phi_\sharp$ in~\eqref{PhAPhd} we have for any $x\in\R^d$ and $y:=\Phi(x)$,
\[
\frac{X(t,x)}{t}=\frac{\Phi^{-1}(Y(t,y))}{t}=A^{-1}\left[\frac{A\big(\Phi^{-1}(Y(t,y))\big)}{t}\right]=A^{-1}\left[\frac{Y(t,y)}{t}\right]+{O(1)\over t}
\;\mathop{\longrightarrow}_{t\to\infty}\;a^*(y)\,A^{-1}\xi,
\]
which yields the desired asymptotics \eqref{asyXa*Axi}.
\end{proof}
\begin{arem}\label{rem.P}
Peirone~\cite[Theorem~3.1]{Pei} proved that in dimension two and for a nonvanishing vector field $b\in C^1_\sharp(Y_2)^2$, there is no periodic solution in $\R^2$ to system~\eqref{bX} (see the definition after \eqref{XtperY}), and that the limit of $X(t,x)/t$ as $t\to\infty$ does exist for any $x\in\R^2$ with the following alternative:
\begin{itemize}
\item If system~\eqref{bX} has no periodic solution in the torus according to \eqref{XtperY}, then the limit of $X(t,x)/t$ as $t\to\infty$ is independent of $x\in\R^2$ as in \eqref{asyXaxi}.
\item If system~\eqref{bX} has a periodic solution in the torus, then the limit of $X(t,x)/t$ as $t\to\infty$ may depend on $x\in\R^2$ as in \eqref{asyXaxiT}.
\end{itemize}
Actually, in any dimension the alternative on the vector $\xi$ in Corollary~\ref{cor.bPhaxi} (see also Proposition~\ref{pro.asyxi}) is naturally connected to the former alternative due to the following result.
\end{arem}
\begin{apro}\label{pro.P}
In the framework of Corollary~\ref{cor.bPhaxi} assume in addition that
\beq\label{ax>0}
\exists\,x\in\R^d,\ \forall\,t\in\R,\quad a\big(t\,\xi+\Pi_{\xi^\perp}(x)\big)>0.
\eeq
Then, the following equivalence holds:
\beq\label{Xper}
\exists\,T>0,\;\;T\xi\in\Z^d\;\Leftrightarrow\;\mbox{system \eqref{bX} has a periodic solution in $Y_d$ but not in $\R^d$}.
\eeq
Moreover, the implication $(\Leftarrow)$ of \eqref{Xper} is always true. When condition \eqref{ax>0} is not satisfied, the implication $(\Rightarrow)$ of\eqref{Xper} does not hold in general.
\end{apro}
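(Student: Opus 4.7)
The plan is to translate everything through the diffeomorphism $\Phi$ to the rectified system $\partial_t Y = a(Y)\,\xi$ of Corollary~\ref{cor.bPhaxi}. Setting $Y(t,y):=\Phi(X(t,x))$ with $y:=\Phi(x)$, one has $Y(t,y)=u(t)\,\xi+\Pi_{\xi^\perp}(y)$, where the scalar $u$ satisfies $u(0)=y\cdot\xi$ and $u'(t)=a(u(t)\,\xi+\Pi_{\xi^\perp}(y))\geq 0$. Two translation identities will be used repeatedly: for every $k\in\Z^d$, $\Phi(x+k)=\Phi(x)+Ak$ and $\Phi^{-1}(z+k)=\Phi^{-1}(z)+A^{-1}k$, both valid because $A,A^{-1}\in\Z^{d\times d}$.

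For the implication $(\Leftarrow)$ of \eqref{Xper}, which is stated to hold unconditionally, I would start from $X(t+\tau,x)=X(t,x)+k$ with some $\tau>0$ and $k\in\Z^d\setminus\{0\}$. Applying $\Phi$ and the first identity gives $Y(t+\tau,y)=Y(t,y)+Ak$; since the $\Pi_{\xi^\perp}$-component of $Y$ is frozen in time, $Ak\in\Z^d\setminus\{0\}$ is forced to be a nonzero scalar multiple of $\xi$, say $Ak=T\xi$. The non-decrease of $u$ (from $a\geq 0$) together with $Ak\neq 0$ then forces $T>0$, and $T\xi\in\Z^d$. For $(\Rightarrow)$ I pick $x_0$ realising \eqref{ax>0}, set $y_0:=\Phi(x_0)$, and exploit that $u\mapsto a(u\xi+\Pi_{\xi^\perp}(y_0))$ is strictly positive and $T$-periodic (the latter because $T\xi\in\Z^d$ and $a$ is $\Z^d$-periodic). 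The finite time $\tau:=\int_0^T du/a(u\xi+\Pi_{\xi^\perp}(y_0))$ satisfies $u(\tau)=u(0)+T$, so $Y(\tau,y_0)=Y(0,y_0)+T\xi$. Applying $\Phi^{-1}$ together with the second translation identity gives $X(\tau,x_0)=x_0+k'$ with $k':=A^{-1}T\xi\in\Z^d\setminus\{0\}$, and the semigroup property \eqref{sgroup} combined with \eqref{XxperY} extends this to $X(t+\tau,x_0)=X(t,x_0)+k'$ for all $t\in\R$, producing the desired $Y_d$-periodic, non-$\R^d$-periodic trajectory.

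To show that $(\Rightarrow)$ can fail without \eqref{ax>0}, I would exhibit the two-dimensional counterexample $\Phi=\mathrm{id}$, $\xi:=e_1$ (so $T=1$ works) and $a(y_1,y_2):=\sin^2(\pi\,y_1)$: the trajectories of $X'=a(X)\,e_1$ are either constant (when $x_1\in\Z$, hence $\R^d$-periodic) or strictly monotone in the first coordinate and asymptotic to the next integer hyperplane without ever reaching it, so none of them is periodic in $Y_d$ without being periodic in $\R^d$, while $T\xi=e_1\in\Z^2$. The main obstacle is the bookkeeping of the matrix $A$ in both directions: verifying $T>0$ in $(\Leftarrow)$ (from the monotonicity of $u$ and $Ak\in\Z^d\setminus\{0\}$) and $A^{-1}T\xi\in\Z^d\setminus\{0\}$ in $(\Rightarrow)$; everything else reduces to the one-dimensional analysis already carried out in the proof of Proposition~\ref{pro.asyxi}.
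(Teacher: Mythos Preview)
Your strategy matches the paper's: reduce to the rectified system $Y'=a(Y)\,\xi$, use the constancy of $\Pi_{\xi^\perp}(Y)$ to force the translation vector to be parallel to $\xi$ in $(\Leftarrow)$, and in $(\Rightarrow)$ use the finite time $\tau=\int_0^T du/a$ to build the torus-periodic orbit. The paper streamlines this by first noting that $\Phi$ preserves torus-periodicity (and $\R^d$-periodicity) of trajectories, so one may assume $\Phi=\mathrm{id}$, $A=I_d$ from the start; this spares exactly the $A$, $A^{-1}$ bookkeeping you flag as the main obstacle. Your extension to all $t$ via the semigroup property \eqref{sgroup} together with \eqref{XxperY} is in fact cleaner than the paper's direct ODE-uniqueness argument for $X\cdot\xi$. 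The counterexamples agree up to a phase shift ($\sin^2(\pi y_1)$ versus $\tfrac{1}{\pi}\cos^2(\pi x_1)$).

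One genuine slip to correct in $(\Rightarrow)$: you pick $x_0$ realising \eqref{ax>0} and set $y_0:=\Phi(x_0)$, then claim $u\mapsto a(u\xi+\Pi_{\xi^\perp}(y_0))$ is strictly positive. But \eqref{ax>0} gives positivity along the line $u\xi+\Pi_{\xi^\perp}(x_0)$, and in general $\Pi_{\xi^\perp}(\Phi(x_0))\neq\Pi_{\xi^\perp}(x_0)$. The variable $x$ in \eqref{ax>0} is a dummy living in the \emph{rectified} picture (it only enters through $a$ and $\xi$), so the witness should be used as the initial point $y_0$ of the rectified flow, with the original initial point taken as $\Phi^{-1}(y_0)$. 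Equivalently, do as the paper does and reduce to $\Phi=\mathrm{id}$ before invoking \eqref{ax>0}; then the issue disappears.
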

\begin{proof}{}
First note that the definition \eqref{PhAPhd} of the $C^1$-diffeomorphism on the torus $\Phi$ implies that a solution $X(\cdot,x)$ is periodic in the torus if, and only if, the function $Y(\cdot,\Phi(x))$ is periodic in the torus. Therefore, it is enough to prove the result in the case $b = a\,\xi$.
\par
If system \eqref{bX} with $b=a\,\xi$ has a periodic solution $X(\cdot,x)$ in the torus but not in $\R^d$, then there exists $\tau>0$ and $k\in\Z^d\setminus\{0\}$ such that by \eqref{Pixio} we have for any $t\in\R$,
\[
\Pi_{\xi^\perp}(x)=\Pi_{\xi^\perp}(X(t+\tau,x))=\Pi_{\xi^\perp}(X(t,x)+k)=\Pi_{\xi^\perp}(X(t,x))+\Pi_{\xi^\perp}(k)=\Pi_{\xi^\perp}(x)+\Pi_{\xi^\perp}(k).
\]
Hence, we obtain that $\Pi_{\xi^\perp}(k)=0$, or equivalently, there exists $T>0$ such that $T\xi=\pm k\in\Z^d$.
\par
Conversely, assume that \eqref{ax>0} is satisfied for some $x\in\R^d$, and there exists $T>0$ such that $T\xi=k\in\Z^d$.
The solution $X(\cdot,x)$ of \eqref{bX} is given by \eqref{Pixio} and \eqref{Fx}. Define
\[
\tau:=\int_0^T {ds\over a\big(s\,\xi+\Pi_{\xi^\perp}(x)\big)}.
\]
Then, by the $T$-periodicity of the function $t\mapsto a\big(t\,\xi+\Pi_{\xi^\perp}(x)\big)$ we have
\[
\ba{ll}
\dis F_x(x\cdot\xi+T) & \dis =\int_0^{x\cdot\xi+T}{ds\over a\big(s\,\xi+\Pi_{\xi^\perp}(x)\big)}
\\ \ecart
& \dis =\int_0^{x\cdot\xi}{ds\over a\big(s\,\xi+\Pi_{\xi^\perp}(x)\big)}+\int_{x\cdot\xi}^{x\cdot\xi+T}{ds\over a\big(s\,\xi+\Pi_{\xi^\perp}(x)\big)}
\\ \ecart
& \dis =F_x(x\cdot\xi)+\tau=F_x\big(X(\tau,x)\cdot\xi\big)\quad \mbox{by \eqref{Fx}},
\ea
\]
which implies that $X(\tau,x)\cdot\xi=x\cdot\xi+T$.
Moreover, the functions $t\mapsto X(t+\tau,x)\cdot\xi$ and $t\mapsto X(t,x)\cdot\xi+T$ are solutions to the system $z'(t)=a\big(z(t)\,\xi+\Pi_{\xi^\perp}(x)\big)$, and agree at $t=0$. Hence, by a uniqueness argument these two solutions are equal, which yields
\[
\forall\,t\in\R,\quad X(t+\tau,x)\cdot\xi=X(t,x)\cdot\xi+T=\big(X(t,x)+k\big)\cdot\xi.
\]
Moreover, again by \eqref{Pixio} and recalling that $\Pi_{\xi^\perp}(k)=0$  we have
\[
\forall\,t\in\R,\quad \Pi_{\xi^\perp}\big(X(t+\tau,x)\big)=\Pi_{\xi^\perp}(x)=\Pi_{\xi^\perp}\big(X(t,x)\big)=\Pi_{\xi^\perp}\big(X(t,x)+k\big).
\]
The two previous identities imply that
\[
\forall\,t\in\R,\quad X(t+\tau,x)=X(t,x)+k.
\]
namely, $X(\cdot,x)$ is periodic in the torus.
\par
Consider the vector field $b(x) := a(x) e_1$ with $a(x):={1\over\pi}\cos^2(\pi x_1)\geq 0$ for $x\in\R^d$. Here, the vector $\xi=e_1$ clearly satisfies the left-hand side of \eqref{Xper}. However, since
\[
\forall\,x\in\R^d,\quad \textstyle a\big({1\over2}e_1+\Pi_{e_1^\perp}(x)\big)={1\over\pi}\cos^2({\pi\over2})=0,
\]
condition \eqref{ax>0} is not satisfied. Moreover, for any $x\in\R^d$ the solution $X(\cdot,x)$ to \eqref{bX} is given by
\[
\left\{\ba{l}
X_1(t,x)=\left\{\ba{cl}
x_1 & \mbox{if }x_1={1\over2}+n,\ n\in\Z
\\ \ecart
{1\over\pi}\arctan(t+\tan(\pi x_1))+n & \mbox{if }x_1\in(-{1\over2}+n,{1\over2}+n),\ n\in\Z
\ea\right.
\\ \ecart
X_i(t,x)=x_i\ \mbox{for }i\geq 2
\ea\right.
\quad (t,x)\in\R\times\R^d.
\]
It follows that any periodic solution in the torus is necessarily stationary and {\em a fortiori} periodic in $\R^d$ in the sense of \eqref{XtperY}. Therefore, the implication $(\Rightarrow)$ of \eqref{Xper} does not hold.
\end{proof}
\subsection{The case of a current field}
The following result deals with the case of a current field $b=A\nabla v$ defined with some matrix-valued conductivity $A$ and some electric field $\nabla v$ with zero average.
\begin{apro}\label{pro.ADv}
Assume that
\beq\label{bDu}
b=A\nabla v\;\;\mbox{in }Y_d\quad\mbox{with}\quad
\left\{\ba{ll}
\dis A\in C^1_\sharp(Y_d)^{d\times d}, & \dis A=A^T\geq 0\mbox{ in }Y_d,
\\ \ecart
\dis v\in C^2_\sharp(Y_d)^d. &
\ea\right.
\eeq
Then, the flow $T_t$ associated with $b$ satisfies the asymptotics
\[
\forall\,x\in\R^d,\quad \lim_{t\to\infty}{X(t,x)\over t}=0.
\]
\end{apro}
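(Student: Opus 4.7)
The plan is to combine the criterion of Theorem~\ref{thm.Cb} with the divergence-curl identity \eqref{dmub=0} of Theorem~\ref{thm.divcurl}. By Theorem~\ref{thm.Cb}, it is enough to prove that $C_b=\{0\}$, i.e.\ that $\int_{Y_d} A\nabla v\,d\mu = 0$ for every invariant probability measure $\mu\in\cI_b$. Fix such a $\mu$. Since $v\in C^2_\sharp(Y_d)$, we may use $\psi:=v$ as a test function in \eqref{dmub=0}, which yields
\[
\int_{Y_d} (A\nabla v)\cdot\nabla v\,d\mu = 0.
\]

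Now the symmetry and nonnegativity of $A$ enter. The integrand is a continuous nonnegative function, so it vanishes $\mu$-almost everywhere. The key step is to upgrade this to the vanishing of $A\nabla v$ itself $\mu$-a.e. This is the only place requiring a small argument, and it is the main (minor) obstacle. I would do it by introducing the symmetric nonnegative square root $C:=\sqrt{A}\in C^0_\sharp(Y_d)^{d\times d}$, so that $A=C^2$ and $(A\nabla v)\cdot\nabla v = |C\nabla v|^2$. Then $C\nabla v=0$ $\mu$-a.e., and consequently
\[
A\nabla v \;=\; C\,(C\nabla v) \;=\; 0 \quad \mu\mbox{-a.e.}
\]
(Equivalently, one could avoid the square root by invoking the Cauchy-Schwarz inequality for the positive semi-definite bilinear form $(X,Y)\mapsto AX\cdot Y$, which gives $|A\nabla v\cdot\eta|^2\le (A\nabla v\cdot\nabla v)(A\eta\cdot\eta)$ for any constant $\eta\in\R^d$, and conclude directly that $\int A\nabla v\cdot\eta\,d\mu=0$.)

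Integrating against $\mu$ therefore gives $\int_{Y_d} b\,d\mu = \int_{Y_d} A\nabla v\,d\mu = 0$. Since the invariant measure $\mu$ was arbitrary and, by Theorem~\ref{thm.Cb}, the set $C_b$ is nonempty, we obtain $C_b=\{0\}$. The equivalence \eqref{Cbsin} of Theorem~\ref{thm.Cb} then yields the asymptotics $X(t,x)/t\to 0$ for every $x\in\R^d$, completing the proof.
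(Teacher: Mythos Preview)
Your proof is correct and follows essentially the same route as the paper's: the paper also tests the divergence-curl identity against $v$ (via \eqref{divcurl}, noting that $\nabla v$ has zero average) to obtain $\int_{Y_d} A\nabla v\cdot\nabla v\,d\mu=0$, then uses the Cauchy--Schwarz inequality for the semi-definite form $A$ (your parenthetical alternative) to deduce $A\nabla v=0$ $\mu$-a.e., and concludes via Theorem~\ref{thm.Cb}. The square-root variant you give is equally valid and only cosmetically different.
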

\begin{proof}{}
Let $\mu\in\cM_p(Y_d)$ be an invariant measure for the flow $T_t$.
By the div-curl relation \eqref{divcurl} combined with the zero average of $\nabla v$, we have
\[
\int_{Y_d}\underbrace{A(y)\nabla v(y)\cdot\nabla v(y)}_{\geq 0}\,\mu(dy)=\int_{Y_d}b(y)\cdot\nabla v(y)\,\mu(dy)=0,
\]
which implies that $A\nabla v\cdot\nabla v=0$ $\mu$-a.e. in $Y_d$. Since the matrix-valued $A$ is symmetric and nonnegative, by the Cauchy-Schwarz inequality we deduce that $A\nabla v=0$ $\mu$-a.e. in $Y_d$, and thus
\[
\int_{Y_d}b(y)\,\mu(dy)=\int_{Y_d}A(y)\nabla v(y)\,\mu(dy)=0.
\]
Therefore, $C_b=\{0\}$ and the equivalence \eqref{Cbsin} of Theorem~\ref{thm.Cb} allows us to conclude.
\end{proof}

\appendix 

\section{Proof of Lemma~\ref{lem-extrac}}
Since $Y_d$ is a compact metrizable space, there exists a subsequence $(\nu_{n_k})_{k\in\N}$ of $(\nu_n)_{n\in\N}$ which converges weakly~$*$ to some probability measure $\mu\in \cM_p(Y_d)$, namely for any $f\in C^0_\sharp(Y_d)$,
\beq\label{wcnunk}
\int_{Y_d} f(y)\, d\nu_{n_k}(y) = \frac{1}{r_{n_k}} \int_0^{r_{n_k}} f(X(s,x))\,ds\;\mathop{\longrightarrow}_{k\to\infty}\;\int_{Y_d} f(y)\, d\mu(y).
\eeq
Let us prove that $\mu$ is invariant for the flow $T_t$. For the sake of simplicity denote $\tau_k:= r_{n_k}$ and $\mu_k:= \nu_{n_k}$.
Let $t\in\R$ and $f\in C^0_\sharp(Y_d)$.
By the semi-group property of the flow \eqref{sgroup} we have
\[
\int_{Y_d} (T_t f)(y)\, d\mu_k(y) =  \frac{1}{\tau_k} \int_0^{\tau_k} f(X(s+t,x))\,ds.
\]
By the change of variable $r=s+t$, it follows that 
\[
\ba{l}
\dis \int_{Y_d} (T_t f)(y)\, d\mu_k(y) =\frac{1}{\tau_k} \int_{t}^{t+\tau_k} f(X(r,x))\,dr
\\ \ecart
\dis =\frac{1}{\tau_k} \int_{0}^{\tau_k} f(X(r,x))\,dr + \frac{1}{\tau_k} \int_{\tau_k}^{t+\tau_k} f(X(r,x))\,dr- \frac{1}{\tau_k} \int_{0}^{t} f(X(r,x))\,dr
\ea
\]
Since $f$ is bounded and $t\in\R$ is fixed, we deduce from \eqref{wcnunk} that
\[
\lim_{k\to \infty} \int_{Y_d} (T_t f)(y)\, d\mu_k(y) = \int_{Y_d} f(y)\, d\mu(y).
\]
However, by the definition of $\mu$ we also have
\[
\lim_{k\to \infty} \int_{Y_d} (T_t f)(y)\, d\mu_k(y) = \int_{Y_d} (T_t f)(y)\, d\mu(y).
\]
Hence, we get that
\[
\forall\,t\in\R,\ \forall\,f\in C^0_\sharp(Y_d),\quad \int_{Y_d} (T_t f)(y)\, d\mu(y) = \int_{Y_d} f(y)\, d\mu(y),
\]
which implies that $\mu$ is invariant for the flow $T_t$. 
\fdem

\end{document}